\newtheorem{theorem}{Theorem}[section]
\renewcommand{\@biblabel}[1]{#1.} 
\begin{document}

\title{Explicit schemes for parabolic and hyperbolic equations}

\author{Petr N. Vabishchevich\footnote{This work was supported by RFBR (project 13-01-00719)}}

\date{\small Nuclear Safety Institute, Russian Academy of Sciences, 52, B. Tulskaya, Moscow, Russia \\
      North-Eastern Federal University, 58, Belinskogo, Yakutsk, Russia \\
      E-mail: vabishchevich@gmail.com}

\maketitle

\begin{abstract}

Standard explicit schemes for parabolic equations are not very convenient for computing practice
due to the fact that they have strong restrictions on a time step.
More promising explicit schemes are associated with explicit-implicit splitting of the problem operator
(Saul'yev asymmetric schemes, explicit alternating direction (ADE) schemes, group explicit method).
These schemes belong to the class of unconditionally stable schemes, but they demonstrate bad approximation properties.
These explicit schemes are treated as schemes of the alternating triangle method and can be considered
as factorized schemes where the problem operator is splitted into the sum of
two operators that are adjoint to each other.
Here we propose a multilevel modification of the alternating triangle method, which
demonstrates better properties in terms of accuracy.
We also consider explicit schemes of the alternating triangle method for the numerical
solution of boundary value problems for hyperbolic equations of second order.
The study is based on the general theory of stability (well-posedness) for operator-difference
schemes.

\textbf{Keywords}: Parabolic equation, Hyperbolic equation, Finite difference schemes, Explicit schemes, Alternating triangle method

\textbf{Mathematics Subject Classification}: 65J08, 65M06, 65M12
\end{abstract}

\thispagestyle{empty}

\section{Introduction}

In the numerical solution of boundary value problems for evolutionary equations, emphasis is on
the approximation in time \cite{Angermann,Ascher,LeVeque}.
For parabolic equations of second order, unconditionally stable schemes are based on implicit approximations.
In this case, we must solve  the corresponding boundary value problem
for an elliptic equation at every new time level. To reduce computational costs,
explicit schemes or different variants of operator-splitting schemes are employed
\cite{Marchuk,Yanenko}. 

Explicit schemes have evident advantages over implicit schemes in terms of computational implementation.
This advantage is especially pronounced in the construction of computational algorithms
oriented to parallel computing systems. At the same time
explicit schemes have the well-known disadvantage that is associated with strong restrictions
on an admissible time step. For parabolic equations, the stability restriction
has the form $\tau < \tau _0 = O(h^2)$,
where $\tau$ is the time step and  $h$ is the step of the spatial grid 
\cite{Samarskii,SamarskiiMatusVabishchevich}.

Some promises are connected with explicit schemes, where calculations are organized in the form of traveling computations. In fact, such schemes are based on the decomposition of the problem operator into two operators, where only one of them is referred to a new time level. That is why such schemes with inhomogeneous approximation in time are called explicit-implicit schemes.
These schemes are unconditionally stable, but they have some problems with approximation.
The schemes are conditionally convergent and have an additional term $O(\tau^2 h^{-2})$
in the truncation error.

First explicit difference schemes with traveling computations for parabolic equations of second order
were proposed by Saul'yev in the book \cite{Saulev} (the book in Russian was published in 1960).
In view of explicit-implicit inhomogeneity of approximation in time, the author called 
them by asymmetric schemes.
Further fundamental result was obtained by A.A. Samarskii in the work \cite{Samarskii964},
where these schemes were treated as factorized operator-difference schemes with the additive splitting
of the problem operator (matrix) into two terms that are adjoint to each other.
Considering systems of ordinary differential equations, we split the origional matrix into the lower and upper triangular matrices, i.e., we speak of the Alternating Triangle Method (ATM).
In solving steady-state problems on the basis of such the operator splitting approach, 
we obtain iterative alternating triangle method
\cite{SamarskiiNikolaev} and the explicit alternating direction schemes \cite{Ilin}.

Further applications of explicit schemes with traveling computations for solving parabolic BVPs
can be attributed to the works performed by Evans with co-authors \cite{evans1985alternating,evans1983group}.
Taking into account peculiarities of computations, there are highlighted explicit schemes of the Group Explicit (Alternating Group Explicit) method.
Possibilities of explicit schemes under consideration for solving
BVPs for parabolic equations on parallel computers are actively discussed in the literature
(see, e.g., \cite{baolin1995age,zhuang2007alternating}).
Explicit schemes with traveling computations are also used for time-dependent convection-diffusion problems
\cite{feng2009parallel,wen2003alternating}.

In this paper, we propose a multilevel modification of the alternating triangle method (MLATM).
To improve the accuracy of  ATM schemes, we add a corrective term with the time derivative, which is taken
from the previous time level. The origional two-level scheme becomes a three-level scheme, but it
preserve stability properties (the MLATM scheme is unconditionally stable).
Because of this, the truncation error is reduced by an order of the time step magnitude:
for the second-order parabolic equation, the additional term in the truncation error is $O(\tau^3 h^{-2})$.
The stability is studied on the basis of the stability (well-posedness) theory for operator-difference schemes in finite-dimensional Hilbert spaces \cite{Samarskii,SamarskiiGulin,SamarskiiMatusVabishchevich}.

The paper is organized as follows.
In Section 2, we consider a model problem in a rectangle for a parabolic equation
of second order. Stability conditions are also formulated here for the explicit scheme.
Construction and investigation of ATM schemes is performed in Section 3.
Section 4 is the core of our work. It describes a modification of the ATM scheme
based on the transition from the two-level scheme to a three-level one.
Problems for hyperbolic equations of second order are discussed in Section 5.
In these problems, the convergence conditions of explicit schemes are acceptable if we apply
the standard version of the alternating triangular method.

\section{Model problem} 

As a typical example, we study the boundary value problem for a parabolic equation
of second order.  Let us consider a model two-dimensional parabolic problem in
a rectangle
\[
  \Omega = \{ {\bm x} \ | \ {\bm x} = (x_1, x_2),
  \quad 0 < x_{\alpha} < l_{\alpha}, \quad \alpha = 1,2 \} .
\]
An unknown function $u({\bm x},t)$ satisfies the equation
\begin{equation}\label{2.1}
   \frac{\partial u}{\partial t} 
   - \sum_{\alpha =1}^{m}
   \frac{\partial }{\partial x_\alpha} 
   \left ( k({\bm x})  \frac{\partial u}{\partial x_\alpha} \right ) = f({\bm x},t),
   \quad {\bm x}\in \Omega,
   \quad 0 < t \leq  T,
\end{equation}
where 
$\underline{k} \leq k({\bm x}) \leq  \overline{k}, \ {\bm x} \in \Omega$,
$\underline{k} > 0$.
The equation (\ref{2.1}) is supplemented with homogeneous Dirichlet boundary conditions
\begin{equation}\label{2.2}
   u({\bm x},t) = 0,
   \quad {\bm x}\in \partial \Omega,
   \quad 0 < t \leq  T.
\end{equation}
In addition, we specify the initial condition
\begin{equation}\label{2.3}
   u({\bm x},0) = u^0({\bm x}),
   \quad {\bm x}\in \Omega.
\end{equation}

In $\Omega$, we define a uniform rectangular grid:
\[
   \bar{\omega} = \{ {\bm x} \ | \ {\bm x} = (x_1, x_2),
   \quad x_\alpha = i_\alpha h_\alpha,
   \quad i_\alpha = 0,1,...,N_\alpha,
   \quad N_\alpha h_\alpha = l_\alpha \quad \alpha = 1,2 \} 
\]
and let $\omega$ be the set of interior points
($\bar{\omega} = \omega \cup \partial \omega$). 
For grid functions $y({\bm x}) = 0, \ {\bm x} \in \partial \omega$,
in the standard way, we introduce a finite-dimensional Hilbert space $H = L_2({\omega})$ 
equipped with the scalar product and norm
\[
  (y,w) \equiv \sum_{{\bm x} \in \omega}
  y({\bm x}) w({\bf x}) h_1 h_2,
  \quad \|y\| \equiv (y,y)^{1/2} .
\]
For a positive definite self-adjoint operator $D$ $(D = D^* > 0)$,
we define the space $H_D$, where
\[
 (y,w)_D \equiv (D y,w),
 \quad  \|y\|_D \equiv (y,y)_D^{1/2} . 
\] 

Let us consider a grid operator
\[
  A = D_1 + D_2.
\]
For one-dimensional grid operators
$D_{\alpha}: H \rightarrow H, \ \alpha = 1,2$,
we have
\[
  \begin{split}
  (D_1 y)({\bm x}) = & - \frac{1}{h_1} \left (
  k(x_1+0.5h_1, x_2) \frac{y(x_1+h_1, x_2)-y({\bm x})}{h_1}
  \right .  \\
  & \left . - k(x_1-0.5h_1, x_2) \frac{y({\bm x})- y(x_1-h_1, x_2)}{h_1}
  \right ), 
  \quad {\bm x} \in \omega ,
  \end{split}
\]
\[
  \begin{split}
  (D_2 y)({\bm x}) = & - \frac{1}{h_2} \left (
  k(x_1, x_2+0.5h_2) \frac{y(x_1, x_2+h_2)-y({\bm x})}{h_2}
  \right .  \\
  & \left . - k(x_1, x_2-0.5h_2) \frac{y({\bm x})- y(x_1, x_2-h_2)}{h_2}
  \right ), 
  \quad {\bm x} \in \omega .
  \end{split}
\]
In the class of sufficiently smooth coefficients $k$ and functions $u$, these operators approximate the differential operators with the second order.
In addition \cite{Samarskii,SamarskiiNikolaev}, we have in the space $H$ of grid functions:
\[
  D_{\alpha} = D^*_{\alpha},
  \quad \underline{k} \delta_{\alpha} E \leq D_{\alpha} \leq \overline{k} \Delta_{\alpha} E,
\]
\[
  \delta_{\alpha} = 
  \frac{4}{h^2_{\alpha}} \sin^2 \frac{\pi h_{\alpha}}{2 l_{\alpha}} ,
  \quad \Delta_{\alpha} = 
  \frac{4}{h^2_{\alpha}} \cos^2 \frac{\pi h_{\alpha}}{2 l_{\alpha}} ,
  \quad  \alpha = 1,2 ,
\]
where $E$ is the identity operator in $H$.
Thus
\begin{equation}\label{2.4}
 A = A^*,
 \quad \underline{k} \delta E \leq A \leq \overline{k} \Delta_{\alpha} E,
 \quad \delta = \sum_{\alpha =1}^{2} \delta_{\alpha},
 \quad \Delta = \sum_{\alpha =1}^{2} \Delta_{\alpha},
\end{equation} 

After approximation in space, using for the approximate solutions the same notation as in (\ref{2.1})--(\ref{2.3}), we obtain the Cauchy problem for the operator-differential equation
\begin{equation}\label{2.5}
  \frac{d u}{d t} + A u = f({\bm x}, t),
  \quad {\bm x} \in \omega ,
  \quad 0 < t \leq  T ,
\end{equation}
\begin{equation}\label{2.6}
  u({\bm x}, 0) = u^0({\bm x}),
  \quad {\bm x} \in \omega .
\end{equation} 

To solve numerically the problem (\ref{2.5}), (\ref{2.6}),
we start our consideration with the simplest explicit two-level scheme.
Let $\tau$ be a step of a uniform grid in time such that $y^n = y(t^n), \ t^n = n \tau$,
$n = 0,1, ..., N, \ N\tau = T$.
Let us approximate equation (\ref{2.5}) by the explicit two-level scheme
\begin{equation}\label{2.7}
  \frac{y^{n+1} - y^{n}}{\tau }
  + A y^{n} = \varphi^n,
  \quad n = 0,1, ..., N-1,
\end{equation}
where, e.g., $\varphi^n = f({\bm x}, t^{n})$.
In view of (\ref{2.6}), the operator-difference equation (\ref{2.7})
is supplemented with  the intitial condition
\begin{equation}\label{2.8}
  y^0 = u^0 .
\end{equation}
The truncation error of the difference scheme (\ref{2.7}), (\ref{2.8}) is  
$O(|h|^2 + \tau^2 + (\sigma - 0.5) \tau)$, where $|h|^2 = h_1^2 + h_2^2$. 

\begin{theorem} 
\label{t-1} 
The explicit difference scheme  (\ref{2.7}), (\ref{2.8}) is stable for
\begin{equation}\label{2.9}
 \tau \leq (1-\varepsilon) \tau_0,
 \quad  \tau_0 = \frac{2}{\|A\|} 
\end{equation} 
at any $0 < \varepsilon < 1$, 
and the finite-difference solution satisfies the estimate  
\begin{equation}\label{2.10}
  \|y^{n+1}\|^2_A \leq \|u^0\|^2_A + \frac{\tau}{2 \varepsilon}
  \sum_{k=0}^{n} \|\varphi^k\|^2 .
\end{equation}
\end{theorem}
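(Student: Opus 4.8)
The plan is to apply the standard energy (operator-inequality) argument for two-level operator-difference schemes. First I would recast \eqref{2.7} in the canonical form $B\dfrac{y^{n+1}-y^n}{\tau}+Ay^n=\varphi^n$ with $B=E$, and then symmetrize it by writing $y^n=\dfrac{y^{n+1}+y^n}{2}-\dfrac{\tau}{2}\,\dfrac{y^{n+1}-y^n}{\tau}$. Introducing $w^n=\dfrac{y^{n+1}-y^n}{\tau}$, the scheme takes the equivalent form
\[
 \Bigl(E-\frac{\tau}{2}A\Bigr)w^n+A\,\frac{y^{n+1}+y^n}{2}=\varphi^n .
\]

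Next I would convert the time-step restriction \eqref{2.9} into an operator inequality. Since $A=A^*>0$ we have $A\le\|A\|E$, so under \eqref{2.9}
\[
 \frac{\tau}{2}A\le\frac{\tau}{2}\|A\|\,E\le(1-\varepsilon)E,
 \qquad\text{hence}\qquad E-\frac{\tau}{2}A\ge\varepsilon E>0 .
\]
The core step is then to take the $H$-inner product of the symmetrized equation with $2\tau w^n=2(y^{n+1}-y^n)$. Because $A=A^*$, the cross terms in the $A$-part cancel and it telescopes, $\bigl(A(y^{n+1}+y^n),\,y^{n+1}-y^n\bigr)=\|y^{n+1}\|_A^2-\|y^n\|_A^2$, which yields the energy identity
\[
 \|y^{n+1}\|_A^2-\|y^n\|_A^2+2\tau\Bigl(\bigl(E-\tfrac{\tau}{2}A\bigr)w^n,\,w^n\Bigr)=2\tau(\varphi^n,w^n).
\]

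Finally, on the left I bound the quadratic form from below by the operator inequality of the previous step, $2\tau\bigl((E-\tfrac{\tau}{2}A)w^n,w^n\bigr)\ge2\tau\varepsilon\|w^n\|^2$, and on the right I use Cauchy--Bunyakovsky together with Young's inequality, with the weight chosen so that $2\tau(\varphi^n,w^n)\le2\tau\varepsilon\|w^n\|^2+\tfrac{\tau}{2\varepsilon}\|\varphi^n\|^2$. The $\|w^n\|^2$ contributions cancel, leaving the one-step bound $\|y^{n+1}\|_A^2\le\|y^n\|_A^2+\tfrac{\tau}{2\varepsilon}\|\varphi^n\|^2$; summing over the levels $k=0,1,\dots,n$ and invoking the initial condition \eqref{2.8} gives exactly \eqref{2.10}.

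The computations are entirely routine; the only point requiring care is the bookkeeping of constants — the Young weight must be tuned so that the $\|w^n\|^2$ term produced by the source is matched precisely by the amount supplied by $E-\tfrac{\tau}{2}A$, which is also why the strict restriction $\tau\le(1-\varepsilon)\tau_0$ (rather than $\tau\le\tau_0$) is indispensable: at the critical step $\varepsilon=0$ the source term cannot be absorbed and no estimate of the form \eqref{2.10} follows. Equivalently, the whole argument is the $\rho$-stability criterion for two-level schemes from \cite{Samarskii,SamarskiiMatusVabishchevich} specialized to $B=E$, and one could simply cite it.
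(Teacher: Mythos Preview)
Your argument is correct and essentially identical to the paper's: the same symmetrization $(E-\tfrac{\tau}{2}A)y_t+A\tfrac{y^{n+1}+y^n}{2}=\varphi^n$, the same energy identity obtained by testing with $2\tau y_t$, the same operator inequality $E-\tfrac{\tau}{2}A\ge\varepsilon E$, and the same Young splitting to absorb the source term and get the level-wise estimate. The only differences are cosmetic (you write $w^n$ for $y_t$ and spell out the telescoping and summation in more detail).
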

 
\begin{proof}
Rewrite the scheme (\ref{2.7}) in the form
\[
  \left (E - \frac{\tau}{2} A \right)   \frac{y^{n+1} - y^{n}}{\tau }
  + A \frac{y^{n+1} + y^{n}}{2} = \varphi^n .
\]
Multiplying this equation scalarly in $H$ by
\[
 2 \tau y_t = 2 (y^{n+1} - y^{n}) ,
\] 
we get
\begin{equation}\label{2.11}
 2\tau \left ( \left (E - \frac{\tau}{2} A \right) y_t, y_t \right ) +
 (A y^{n+1}, y^{n+1}) - (A y^{n}, y^{n}) = 
 2\tau (\varphi^n, y_t) . 
\end{equation}
Under the restriction (\ref{2.9}) on a time step, we have
\[
  E - \frac{\tau}{2} A \geq \varepsilon E .
\]
To estimate the right-hand side of (\ref{2.11}), we use the inequality
\[
 (\varphi^n, y_t) \leq \varepsilon \|y_t\|^2 + \frac{1}{4 \varepsilon} \|\varphi^n\|^2 .
\]
From (\ref{2.11}), we arrive at the following level-wise estimate;
\[
 \|y^{n+1}\|^2_A \leq \|y^{n}\|^2_A + \frac{\tau }{2\varepsilon }  \|\varphi^n\|^2,
\]
which implies the required estimate (\ref{2.10}).
\end{proof}

Taking into account (\ref{2.4}), for the time step, we have $\tau < \tau_0$,
where, for the above-considered model problem, $\tau_0 = O(|h|^2)$. 

\section{Schemes of the alternating triangle method} 

Let us decompose the problem operator $A$ into the sum of two operators:
\begin{equation}\label{3.1}
 A = A_1 + A_2 .
\end{equation}
Individual operator terms in (\ref{3.1}) must make it possible to construct splitting schemes
based on explicit calculations.

In the alternating triangle method \cite{Samarskii964,Samarskii,SamarskiiNikolaev},
the origional matrix is splitted into the upper and lower matrices, which correspond
to the operators adjoint to each other:
\begin{equation}\label{3.2}
  A_1 = A_2^* .
\end{equation}
With regard to the problem (\ref{2.5}), (\ref{2.6}), we have
\[
  \begin{split}
  (A_1 y)({\bm x}) = & - \frac{1}{h_1} 
  k(x_1+0.5h_1, x_2) \frac{y(x_1+h_1, x_2)-y({\bm x})}{h_1}
  \\
  & - \frac{1}{h_2} 
  k(x_1, x_2+0.5h_2) \frac{y(x_1, x_2+h_2)-y({\bm x})}{h_2} ,
  \quad {\bm x} \in \omega ,
  \end{split}
\]
\[
  \begin{split}
  (A_2 y)({\bm x}) = 
  &- k(x_1-0.5h_1, x_2) \frac{y({\bm x})- y(x_1-h_1, x_2)}{h_1} , 
  \\
  & - k(x_1, x_2-0.5h_2) \frac{y({\bm x})- y(x_1, x_2-h_2)}{h_2} , 
  \quad {\bm x} \in \omega .
  \end{split}
\]
Thus, we have splitting of fluxes.

To solve the problem (\ref{2.5}), (\ref{2.6}), (\ref{3.1}), (\ref{3.2}), we can use
various splitting schemes, where the transition to a new time level is associated with solving subproblems
that are described by the individual operators $A_1$ and  $A_2$. For the above two-component splitting
(\ref{3.1}), it is natural to apply factorized additive schemes \cite{Samarskii,Vabishchevich}.
In this case, we have
\begin{equation}\label{3.3}
  (E + \sigma \tau A_1) (E + \sigma \tau A_2) \frac{y^{n+1} - y^{n}}{\tau }
  + A y^{n} = \varphi^n,
  \quad n = 0,1, ..., N-1,
\end{equation} 
where $\sigma$ is a weight parameter and $\varphi^n = f({\bm x}, \sigma t^{n+1} + (1-\sigma) t^{n})$.
The value  $\sigma = 0.5$ corresponds to the classical Peaceman-Rachford scheme 
\cite{PeacemanRachford}, whereas for $\sigma = 1$, we obtain an operator analog of the Douglas-Rachford scheme
\cite{DouglasRachford}.

\begin{theorem} 
\label{t-2} 
The factorized scheme of the alternating triangle method (\ref{3.1})--(\ref{3.3})
is unconditionally stable in $H_A$ under the restriction $\sigma \geq  0.5$ . The following a
priori estimate holds:
\begin{equation}\label{3.4}
  \|y^{n+1}\|^2_A \leq \|u^0\|^2_A + \frac{\tau}{2 }
  \sum_{k=0}^{n} \|\varphi^k\|^2 .
\end{equation}
\end{theorem}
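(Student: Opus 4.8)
The plan is to put the factorized scheme into the canonical two-level form $B y_t + A y^n = \varphi^n$, with $y_t = (y^{n+1}-y^n)/\tau$ and $B = (E + \sigma\tau A_1)(E + \sigma\tau A_2)$, and then to repeat the energy argument used for Theorem~\ref{t-1}. The point at which the adjointness hypothesis (\ref{3.2}) is exploited is the description of $B$: since $A_1 = A_2^*$ we have $E + \sigma\tau A_1 = (E + \sigma\tau A_2)^*$, hence $B = (E + \sigma\tau A_2)^*(E + \sigma\tau A_2)$, so $B = B^*$ and $B \ge E > 0$. Expanding the product and using $A_1 + A_2 = A$ together with $A_1 A_2 = A_2^* A_2 \ge 0$ gives the representation
\[
 B = E + \sigma\tau A + \sigma^2\tau^2 A_2^* A_2 ,
\]
which is the form I will actually use.

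From this representation the key operator inequality is immediate: for $\sigma \ge 0.5$,
\[
 B - \frac{\tau}{2} A = E + \Bigl(\sigma - \frac{1}{2}\Bigr)\tau A + \sigma^2\tau^2 A_2^* A_2 \ge E ,
\]
because $A = A^* > 0$ and $A_2^* A_2 \ge 0$. This is the only place where $\sigma \ge 0.5$ is needed, and the inequality holds for every $\tau > 0$, which is precisely why the scheme is unconditionally stable.

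With $B - \tfrac{\tau}{2}A \ge E$ available, I would argue exactly as in the proof of Theorem~\ref{t-1}: rewrite the scheme as
\[
 \Bigl(B - \frac{\tau}{2} A\Bigr) y_t + A\,\frac{y^{n+1} + y^n}{2} = \varphi^n ,
\]
multiply scalarly in $H$ by $2\tau y_t = 2(y^{n+1} - y^n)$, and use the self-adjointness of $A$ to rewrite the middle term as $\|y^{n+1}\|_A^2 - \|y^n\|_A^2$. This yields
\[
 2\tau\Bigl(\Bigl(B - \frac{\tau}{2} A\Bigr) y_t, y_t\Bigr) + \|y^{n+1}\|_A^2 - \|y^n\|_A^2 = 2\tau(\varphi^n, y_t) .
\]
Bounding the first term below by $2\tau\|y_t\|^2$ and the right-hand side above by $2\tau\|y_t\|^2 + \tfrac{\tau}{2}\|\varphi^n\|^2$ (by the Cauchy--Schwarz and Young inequalities), the $\|y_t\|^2$ terms cancel and we get the level-wise estimate $\|y^{n+1}\|_A^2 \le \|y^n\|_A^2 + \tfrac{\tau}{2}\|\varphi^n\|^2$; summing over time levels and using $y^0 = u^0$ gives (\ref{3.4}).

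The computation is routine once $B$ is handled, so I expect the only real content to be the observation that (\ref{3.2}) turns $B$ into $C^*C$ with $C = E + \sigma\tau A_2$: this single fact delivers at once the self-adjointness of $B$, the bound $B \ge E$, and the splitting $B = E + \sigma\tau A + \sigma^2\tau^2 A_2^* A_2$ that makes the role of $\sigma \ge 0.5$ transparent. As an alternative one could simply invoke the general stability theorem for two-level schemes $B y_t + Ay = \varphi$ with $A = A^* > 0$, $B = B^*$, $B \ge \tfrac{\tau}{2}A$, but the direct energy estimate above is shorter and stays in the style of Theorem~\ref{t-1}.
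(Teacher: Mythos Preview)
Your proof is correct and follows essentially the same route as the paper: expand $B=(E+\sigma\tau A_1)(E+\sigma\tau A_2)$ using $A_1=A_2^*$ to get $B=E+\sigma\tau A+\sigma^2\tau^2 A_2^*A_2$, deduce $B-\tfrac{\tau}{2}A\ge E$ for $\sigma\ge 0.5$, and run the energy identity exactly as in Theorem~\ref{t-1}. Your explicit observation that $B=(E+\sigma\tau A_2)^*(E+\sigma\tau A_2)$ is a tidy way to see the self-adjointness and positivity of $B$ at once, but otherwise the argument matches the paper step for step.
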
 
 
\begin{proof}
The factorized operator
\[
 B = (E + \sigma \tau A_1) (E + \sigma \tau A_2)
\] 
for the splitting  (\ref{3.1}), (\ref{3.2}) with  $\sigma \geq  0$
is self-adjoint and positive definite. More precisely, we have
\[
 B = B^* = E + \sigma \tau A +  \sigma^2 \tau^2 A_1 A_2 \geq E + \sigma \tau A .
\] 
In the above notation, the scheme (\ref{3.3}) can be written as
\begin{equation}\label{3.5}
  \left (B - \frac{\tau}{2} A \right)   \frac{y^{n+1} - y^{n}}{\tau }
  + A \frac{y^{n+1} + y^{n}}{2} = \varphi^n . 
\end{equation}
Under the restriction $\sigma \geq  0.5$, we have
\[
 B - \frac{\tau}{2} A \geq E.
\] 
Multiplication of (\ref{3.5}) scalarly in $H$ by $2 \tau y_t$ yields the equality
\[
 2\tau \left ( \left (B - \frac{\tau}{2} A \right) y_t, y_t \right ) +
 (A y^{n+1}, y^{n+1}) - (A y^{n}, y^{n}) = 
 2\tau (\varphi^n, y_t) . 
\]
Under the restriction (\ref{2.9}) on the time step, we have
\[
  E - \frac{\tau}{2} A \geq \varepsilon E .
\]
For the right-hand side, we use the inequality
\[
 (\varphi^n, y_t) \leq \|y_t\|^2 + \frac{1}{4 } \|\varphi^n\|^2 
\] 
and obtain
\[
 \|y^{n+1}\|^2_A \leq \|y^{n}\|^2_A + \frac{\tau }{2}  \|\varphi^n\|^2 ,
\] 
which immediately implies the estimate (\ref{3.4}).
\end{proof}

Special attention should be given to the investigation of the accuracy of the alternating triangle method.
The accuracy of the approximate solution of (\ref{2.5}), (\ref{2.6}) is estimated
without considering the truncation error due to approximation in space.

The convergence of the factorized scheme of the alternating triangle method
(\ref{3.1})--(\ref{3.3}) for the problem (\ref{2.5}), (\ref{2.6})
is studied in the standard way.
The equation for the error $z^n = y^n - u^n$ has the form
\[
  B \frac {z^{n+1}-z^n}{\tau} + A z^{n} = \psi^n,
  \quad n = 0,1, ..., N-1,
\]
with the truncation error $\psi^n$.
By Theorem \ref{t-2}, the error satisfies estimate
\[
  \|z^{n+1}\|^2_A \leq \frac{\tau}{2 }
  \sum_{k=0}^{n} \|\psi^k\|^2 .
\]
The truncation error has the form
\begin{equation}\label{3.6}
  \psi^n = \psi^n_{\sigma} + \psi^n_s , 
\end{equation} 
where 
\begin{equation}\label{3.7}
\begin{split}
  \psi^n_{\sigma} & = \left (\sigma - \frac{1}{2} \right ) \tau A \frac{d u}{d t}(t^{n+1/2}) + O(\tau^2) , \\
  \psi^n_s & = \sigma^2 \tau^2 A_1 A_2 \frac{d u}{d t}(t^{n+1/2}) + O(\tau^3) . 
\end{split}
\end{equation}
The first part of the truncation error $\psi^n_{\sigma}$ is standard for the conventional scheme with weights:
\[
  \frac{y^{n+1} - y^{n}}{\tau }
  + A (\sigma y^{n+1} + (1-\sigma) y^{n}) = \varphi^n,
  \quad n = 0,1, ..., N-1, 
\] 
which converges in $H_A$ with the second order with respect to $\tau$ for $\sigma =0.5$, and
only with the first order if $\sigma \neq 0.5$.

In considering the truncation error for explicit schemes of the alternating triangle method,
emphasis is on the second part $\psi^n_s$  in (\ref{3.6}), (\ref{3.7}).
Taking into account the explicit representation for the operators $A_1$ and $A_2$ in the model problem 
(\ref{2.5}), (\ref{2.6}),  we get $\psi^n_s = O(\tau^2 |h|^{-2})$.
Because of this, the operator-difference scheme (\ref{3.1})--(\ref{3.3})
for the problem (\ref{2.5}), (\ref{2.6}) has accuracy
\begin{equation}\label{3.8}
 \|z^{n+1}\|_A \leq M \left (\left (\sigma - \frac{1}{2} \right ) \tau + \tau^2 |h|^{-2} \right ) . 
\end{equation}
This conditionally convergent scheme has strong enough restrictions on a time step.
That is why it seems reasonable to modify this scheme of the alternating triangle method (\ref{3.1})--(\ref{3.3}) in order to improve accuracy by reducing error $\psi^n_s$.

\section{Multilevel alternating triangle method} 

The scheme of alternating triangle method (\ref{3.3}) is a two-level scheme.
We construct a three-level modification of this scheme, which preserves the unconditional
stability but demonstrates more acceptable estimates for accuracy.
Such schemes are called here as schemes of MLATM (Multi-Level Alternating Triangle Method).

Rewrite the scheme (\ref{3.3}) as
\[
  (E + \sigma \tau A)  \frac{y^{n+1} - y^{n}}{\tau } +
  \sigma^2\tau^2 A_1 A_2\frac{y^{n+1} - y^{n}}{\tau }
  + A y^{n} = \varphi^n .
\]
Here we have separated the term that corresponds to the standard scheme with weights from the
term proportional to $\tau^2$, which is associated with splitting.
For this, we replace the term  associated with splitting by
\[
  \sigma^2\tau^2 A_1 A_2\frac{y^{n+1} - y^{n}}{\tau } - 
  \sigma^2\tau^2 A_1 A_2\frac{y^{n} - y^{n-1}}{\tau } =
 \sigma^2\tau^3 A_1 A_2\frac{y^{n+1} - 2y^{n} + y^{n-1}}{\tau^2 }.
\]
After this modification the MLATM scheme takes the form
\begin{equation}\label{4.1}
  (E + \sigma \tau A) ) \frac{y^{n+1} - y^{n}}{\tau } +
  \sigma^2\tau^3 A_1 A_2\frac{y^{n+1} - 2y^{n} + y^{n-1} }{\tau^2 }
  + A y^{n} = \varphi^n .
\end{equation}
As in the standard ATM scheme (\ref{3.3}), the transition to a new time level in (\ref{4.1})
involves the solution of the problem
\[
  (E + \sigma \tau A_1) (E + \sigma \tau A_2) y^{n+1} = \xi^{n} .
\]
For the truncation error, now we have the representation (\ref{3.3}), where
\begin{equation}\label{4.2}
\begin{split}
  \psi^n_{\sigma} & = \left (\sigma - \frac{1}{2} \right ) \tau A \frac{d u}{d t}(t^{n+1/2}) + O(\tau^2) , \\
  \psi^n_s & = \sigma^2 \tau^3 A_1 A_2 \frac{d^2 u}{d t^2}(t^{n+1/2}) + O(\tau^4) . 
\end{split}
\end{equation}
Thus, the error associated with splitting $ \psi^n_s $ decreases by an order of $\tau$.
If we use the MLATM scheme for the splitting  (\ref{3.1}),  (\ref{3.2}) for the approximate
solution of the problem (\ref{2.1})--(\ref{2.3}) (explicit schemes), then the truncation error is $\psi^n_s = O(\tau^3 |h|^{-2})$. 

Our main result is the following.

\begin{theorem} 
\label{t-3} 
The scheme of the multilevel alternating triangle method (\ref{3.1}), (\ref{3.2}),  (\ref{4.2})
is unconditionally stable under the restriction $\sigma \geq  0.5$. The following a
priori estimate holds:
\begin{equation}\label{4.3}
  \mathcal{E}_{n+1} \leq 
  \mathcal{E}_{n} +
  \frac{\tau}{2}  \|\varphi^n \|^2_{(E + \sigma \tau A)^{-1}} ,
\end{equation}
where
\[
 \mathcal{E}_{n+1} =
 \left \| \frac{y^{n+1} + y^{n}}{2 } \right \|^2_A +
 \left \| \frac{y^{n+1} - y^{n}}{\tau } \right \|^2_{\frac{\tau}{2} E + \sigma^2\tau^3 A_1 A_2 + \frac{\tau^2}{4} (2\sigma - 1) A } .
\] 
\end{theorem}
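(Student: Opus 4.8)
The plan is to recast the three-level scheme (\ref{4.1}) into a symmetric canonical form and then run the same energy argument used in the proofs of Theorems~\ref{t-1} and~\ref{t-2} (this is also subsumed by the general stability theory for three-level operator-difference schemes, but it is quicker to get the precise constant directly); the new feature is that the energy $\mathcal{E}_n$ now carries both a ``potential'' part $\|(y^{n}+y^{n-1})/2\|^2_A$ and a ``kinetic'' part $\|(y^{n}-y^{n-1})/\tau\|^2$, and the telescoping is produced jointly by the symmetric difference $y^{n+1}-y^{n-1}$ and the second difference $y^{n+1}-2y^{n}+y^{n-1}$. First I would split the first term of (\ref{4.1}) via
\[
  \frac{y^{n+1}-y^{n}}{\tau}
  = \frac{y^{n+1}-y^{n-1}}{2\tau} + \frac{y^{n+1}-2y^{n}+y^{n-1}}{2\tau}
\]
and absorb one half of it into the splitting term, rewriting (\ref{4.1}) as
\[
  (E+\sigma\tau A)\,\frac{y^{n+1}-y^{n-1}}{2\tau}
  + C\,\frac{y^{n+1}-2y^{n}+y^{n-1}}{\tau}
  + A y^{n} = \varphi^{n},
  \qquad C := \tfrac12(E+\sigma\tau A) + \sigma^2\tau^2 A_1 A_2 .
\]
The structural facts I will use are: $A=A^*>0$ by (\ref{2.4}); for the alternating-triangle splitting (\ref{3.2}) one has $A_1A_2 = A_2^*A_2$, so $A_1A_2=(A_1A_2)^*\ge 0$ — the same observation that made $B=B^*$ positive in the proof of Theorem~\ref{t-2}; consequently $C=C^*$ and $C\ge\tfrac12 E>0$ whenever $\sigma\ge 0$.

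Next I would take the scalar product in $H$ of the rewritten scheme with $y^{n+1}-y^{n-1}$. Three contributions appear. The $(E+\sigma\tau A)$-term is the manifestly nonnegative quantity $\frac{\tau}{2}\left\|\frac{y^{n+1}-y^{n-1}}{\tau}\right\|^2_{E+\sigma\tau A}$. The $C$-term, using $C=C^*$ and the algebraic identity $(C(a-b),a+b)=\|a\|^2_C-\|b\|^2_C$, equals $\left\|\frac{y^{n+1}-y^{n}}{\tau}\right\|^2_{\tau C} - \left\|\frac{y^{n}-y^{n-1}}{\tau}\right\|^2_{\tau C}$ and hence telescopes. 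For the $A$-term I would use $A=A^*$ to write $(Ay^{n},y^{n+1}-y^{n-1}) = (Ay^{n+1},y^{n})-(Ay^{n},y^{n-1})$ and then the polarization identity $(Ay^{n+1},y^{n}) = \left\|\frac{y^{n+1}+y^{n}}{2}\right\|^2_A - \frac{\tau^2}{4}\left\|\frac{y^{n+1}-y^{n}}{\tau}\right\|^2_A$, which telescopes as well. Collecting the level-$(n+1)$ pieces of the two telescoping terms reproduces exactly $\mathcal{E}_{n+1}$, since $\tau C-\frac{\tau^2}{4}A = \frac{\tau}{2}E + \sigma^2\tau^3 A_1A_2 + \frac{\tau^2}{4}(2\sigma-1)A$; thus one obtains the identity
\[
  \frac{\tau}{2}\left\|\frac{y^{n+1}-y^{n-1}}{\tau}\right\|^2_{E+\sigma\tau A}
  + \mathcal{E}_{n+1} - \mathcal{E}_n
  = \tau\left(\varphi^{n}, \frac{y^{n+1}-y^{n-1}}{\tau}\right).
\]
Estimating the right-hand side by the Cauchy--Schwarz inequality in the $(E+\sigma\tau A)$-inner product together with Young's inequality, $\tau(\varphi^{n},v)\le \frac{\tau}{2}\|\varphi^{n}\|^2_{(E+\sigma\tau A)^{-1}} + \frac{\tau}{2}\|v\|^2_{E+\sigma\tau A}$ with $v=(y^{n+1}-y^{n-1})/\tau$, the $\|v\|^2$-terms cancel and (\ref{4.3}) follows. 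Finally, for $\sigma\ge 0.5$ each of the three summands of the weight operator in the kinetic part of $\mathcal{E}_{n+1}$ is nonnegative (the first strictly positive), so $\mathcal{E}_{n+1}\ge 0$ is a genuine energy; since no restriction on $\tau$ entered the argument, the scheme is unconditionally stable, and summing (\ref{4.3}) over levels gives the expected bound in terms of the data and the starting values $y^{0},y^{1}$.

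The single delicate point is the treatment of $(Ay^{n},y^{n+1}-y^{n-1})$. It is tempting to replace $Ay^{n}$ by $A(y^{n+1}+y^{n-1})/2$ first, but then the telescoped $A$-part comes out as $\frac12\|y^{n+1}\|^2_A$ together with a different (though equivalent) kinetic weight; to reproduce \emph{verbatim} the energy $\mathcal{E}_{n+1}$ of the statement one must keep $y^{n}$ and use the $A$-polarization above, so that the $-\frac{\tau^2}{4}\|(y^{n+1}-y^{n})/\tau\|^2_A$ correction combines with the $\tau C$-weight to yield precisely the coefficient $\frac{\tau^2}{4}(2\sigma-1)A$. Apart from this, the computation is the same routine energy bookkeeping already carried out in the proofs of Theorems~\ref{t-1} and~\ref{t-2}.
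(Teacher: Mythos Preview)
Your argument is correct and is essentially the paper's own proof: the paper likewise splits $(y^{n+1}-y^n)/\tau$ into the centered first difference and half the second difference, then tests against $y^{n+1}-y^{n-1}$ (written there as $2(v^{n+1}-v^n)=\tau(w^{n+1}+w^n)$ with $v^n=(y^n+y^{n-1})/2$, $w^n=(y^n-y^{n-1})/\tau$) and applies the same Young inequality in the $(E+\sigma\tau A)$-norm. The only cosmetic difference is that the paper moves the $-\tfrac{\tau^2}{4}A$ correction into the second-difference operator \emph{before} taking the inner product (via $y^n=\tfrac14(y^{n+1}+2y^n+y^{n-1})-\tfrac14(y^{n+1}-2y^n+y^{n-1})$), whereas you extract it \emph{after} via the polarization of $(Ay^{n+1},y^{n})$; the resulting energy identity and the weight $\tfrac{\tau}{2}E+\sigma^2\tau^3 A_1A_2+\tfrac{\tau^2}{4}(2\sigma-1)A$ are identical.
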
 
 
\begin{proof}
Taking into account that 
\[
  \frac{y^{n+1} - y^{n}}{\tau } = 
  \frac{y^{n+1} - y^{n-1}}{2\tau } +
  \frac{\tau}{2} \frac{y^{n+1} - 2 y^{n} + y^{n-1}}{\tau^2},
\] 
we write the scheme (\ref{4.1}) in the form 
\begin{equation}\label{4.4}
  C \frac{y^{n+1} - y^{n-1}}{2\tau } +
  G \frac{y^{n+1} - 2 y^{n} + y^{n-1}}{\tau^2} +
  A y^{n} =
  \varphi^{n} ,
\end{equation} 
where 
\[
 C = E + \sigma \tau A,
\] 
\[
 G = \frac{\tau}{2} (E + \sigma \tau A) + \sigma^2\tau^3 A_1 A_2.
\] 

By
\[
  y^{n} = 
  \frac{1}{4} (y^{n+1} + 2 y^{n} + y^{n-1}) -
  \frac{1}{4} (y^{n+1} - 2 y^{n} + y^{n-1}) ,
\] 
we can rewrite (\ref{4.4}) as 
\begin{equation}\label{4.5}
\begin{split}
  C \frac{y^{n+1} - y^{n-1}}{2\tau } & +
  \left (G - \frac{\tau^2}{4} A \right ) 
  \frac{y^{n+1} - 2 y^{n} + y^{n-1}}{\tau^2} \\
  & +
  A \frac{y^{n+1} - 2 y^{n} + y^{n-1}}{4} =
  \varphi^{n} .
\end{split}
\end{equation} 
Let  
\[
  v^{n} = \frac{1}{2} (y^{n} + y^{n-1}),
  \quad w^{n} = \frac{y^{n} - y^{n-1}}{\tau} ,
\]
then (\ref{4.5}) can be written in the form 
\begin{equation}\label{4.6}
  C \frac{w^{n+1} + w^{n}}{2} 
  + R \frac{w^{n+1} - w^{n}}{\tau } +
  \frac{1 }{2} A ( v^{n+1} + v^{n})  =
  \varphi^{n} ,
\end{equation}
where 
\[
  R = G - \frac{\tau^2}{4} A .
\] 
Multiplying scalarly both sides of (\ref{4.6}) by  
\[
  2 (v^{n+1} - v^{n}) =
  \tau (w^{n+1} + w^{n}) ,
\]
we get the equality
\begin{equation}\label{4.7}
\begin{split}
  \frac{\tau}{2} 
  ( C (w^{n+1} & + w^{n}), w^{n+1} + w^{n}) +
  ( R (w^{n+1} - w^{n}), w^{n+1} + w^{n}) 
  \\
  & +
  ( A (v^{n+1} + v^{n}), v^{n+1} - v^{n}) =
  \tau (\varphi^{n}, w^{n+1} + w^{n} ) .
\end{split}
\end{equation}
To estimate the right-hand side, we use the inequality 
\[
  ( \varphi^{n}, w^{n+1} + w^{n} ) \leq 
  \frac{1 }{2} 
  ( C (w^{n+1} + w^{n}) +
  \frac{1}{2} 
  (C^{-1} \varphi^n, \varphi^n) .
\]
This makes it possible to get from (\ref{4.7}) the inequality 
\begin{equation}\label{4.8}
  \mathcal{E}_{n+1} \leq 
  \mathcal{E}_{n} +
  \frac{\tau}{2} 
  (C^{-1} \varphi^n, \varphi^n) ,
\end{equation}
where we use the notation 
\[
  \mathcal{E}_{n} = 
  ( A v^{n}, v^{n})
  + ( R w^{n}, w^{n}) .
\]

The inequality (\ref{4.8}) is the desired a priori estimate, 
if we show that $\mathcal{E}_{n}$ defines the squared norm of the difference solution.
By the positive definiteness of $A$, it is sufficient to require a 
positiveness of the operator $R$.
In the above notation, we have 
\[
  R =  \frac{\tau}{2} (E + \sigma \tau A) + \sigma^2\tau^3 A_1 A_2 
  - \frac{\tau^2}{4} A 
  > \frac{\tau^2}{4} (2\sigma - 1) A .
\] 
Thus, $R > 0$ for $\sigma \geq 0.5$.
This concludes the proof.
\end{proof}

\section{Hyperbolic equations} 

Special attention should be given to the problem of constructing explicit schemes of the alternating triangle method  for hyperbolic equations of second order. As a model problem, we will consider the boundary value problem  in a rectangle $\Omega$ for the equation
\begin{equation}\label{5.1}
   \frac{\partial^2 u}{\partial t^2} 
   - \sum_{\alpha =1}^{m}
   \frac{\partial }{\partial x_\alpha} 
   \left ( k({\bm x})  \frac{\partial u}{\partial x_\alpha} \right ) = f({\bm x},t),
   \quad {\bm x}\in \Omega,
   \quad 0 < t \leq  T .
\end{equation}
The equation (\ref{5.1}) is supplemented with the boundary condition (\ref{2.2}) and two initial conditions:
\begin{equation}\label{5.2}
   u({\bm x},0) = u^0({\bm x}),
   \quad  \frac{\partial u}{\partial t} ({\bm x},0) = v^0({\bm x}),
   \quad {\bm x}\in \Omega.
\end{equation}

After approximation in space (see (\ref{2.5}), (\ref{2.6})),
from the problem  (\ref{2.2}),  (\ref{5.1}),  (\ref{5.2}), we arrive at the problem
\begin{equation}\label{5.3}
  \frac{d^2 u}{d t^2} + A u = f({\bm x}, t),
  \quad {\bm x} \in \omega ,
  \quad 0 < t \leq  T ,
\end{equation}
\begin{equation}\label{5.4}
  u({\bm x}, 0) = u^0({\bm x}),
  \quad \frac{d u}{d t} ({\bm x}, 0) = v^0({\bm x}),
  \quad {\bm x} \in \omega .
\end{equation} 
For the operator $A$,  the splitting (\ref{3.1}), (\ref{3.2}) takes place.

The scheme of the alternating triangle method for the problem (\ref{3.1}), (\ref{3.2}), (\ref{5.3}), (\ref{5.4}) is written \cite{Vabishchevich} like this:
\begin{equation}\label{5.5}
  G \frac{y^{n+1} - 2 y^{n} + y^{n-1}}{\tau^2} +
  A y^{n} =
  \varphi^{n} ,
  \quad n = 1,2, ..., N-1, 
\end{equation} 
where $y^0, y^1$ are prescribed. The factorized operator $G$ has the form
\begin{equation}\label{5.6}
 G = (E + \sigma \tau^2 A_1) (E + \sigma \tau^2 A_2).
\end{equation} 

For the scheme (\ref{5.5}), (\ref{5.6}), the truncation error associated with splitting is
\[
  \psi^n_s = \sigma^2 \tau^4 A_1 A_2 \frac{d^2 u}{d t^2}(t^{n}) + O(\tau^5) . 
\]
In the numerically solving problem (\ref{2.2}),  (\ref{5.1}),  (\ref{5.2}),
the explicit scheme  (\ref{5.5}), (\ref{5.6}) has the truncation error $\psi^n_s = O(\tau^4 |h|^{-2})$. 
Such the truncation error is appropriate for many applied problems. This allows us to restrict ourselves to the classical version of explicit schemes for the alternating triangle method
without the multilevel modification. It remains to obtain the condition for stability of the scheme (\ref{5.5}), (\ref{5.6}).

In the above notation, the scheme (\ref{5.5}), (\ref{5.6}) can be written as
\begin{equation}\label{5.7}
  R \frac{w^{n+1} - w^{n}}{\tau } +
  \frac{1 }{2} A ( v^{n+1} + v^{n})  =
  \varphi^{n} .
\end{equation}
In our case, we have
\begin{equation}\label{5.8}
 R = E + \left ( \sigma - \frac{1}{4} \right )\tau^2 A + \sigma^2\tau^4 A_1 A_2 \geq E 
\end{equation} 
under the restriction $\sigma \geq 0.25$.

Similarly to (\ref{4.7}), (\ref{4.8}), from (\ref{5.7}), we get
\begin{equation}\label{5.9}
  \mathcal{E}_{n+1} =
  \mathcal{E}_{n} +
  \tau (\varphi^{n}, w^{n+1} + w^{n} ) .
\end{equation}
For the right-hand side of (\ref{5.9}), we apply the estimates
\[
 \tau (\varphi^{n}, w^{n}) \leq 
 \frac{\tau }{2} \| \varphi^{n} \|^2_{R^{-1}} +
 \frac{\tau }{2} \| w^{n} \|^2_{R}  ,
\] 
\[
 \tau (\varphi^{n}, w^{n+1}) \leq 
 \frac{\tau }{2} \left (1 + \frac{\tau }{2} \right ) \| \varphi^{n} \|^2_{R^{-1}} +
 \frac{\tau }{2}  \left (1 + \frac{\tau }{2} \right )^{-1} \| w^{n+1} \|^2_{R}  .
\] 
Besides, for all $\varepsilon > 0$, we have
\[
 1 + \varepsilon \tau < \exp(\varepsilon \tau ) .
\] 
In view of these estimates, from (\ref{5.9}), it follows the level-wise estimate
\begin{equation}\label{5.10}
  \mathcal{E}_{n+1} = \exp(\tau) \mathcal{E}_{n} + 
  \exp(0.75 \tau) \tau \|\varphi^{n} \|^2_{R^{-1}} ,
\end{equation}
which ensures the stability of the solution with respect to the initial data and the right-hand side.
This proves the following statement.

\begin{theorem} 
\label{t-4} 
The scheme of the alternating triangle method (\ref{3.1}), (\ref{3.2}),  (\ref{5.5}),  (\ref{5.6})
is unconditionally stable  under the restriction $\sigma \geq  0.25$. 
The solution satisfies the estimate (\ref{5.8}),  (\ref{5.10}), where
\[
 \mathcal{E}_{n+1} =
 \left \| \frac{y^{n+1} + y^{n}}{2 } \right \|^2_A +
 \left \| \frac{y^{n+1} - y^{n}}{\tau } \right \|^2_{R} .
\] 
\end{theorem}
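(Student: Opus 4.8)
\section*{Proof proposal for Theorem \ref{t-4}}

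The plan is to reduce the three-level scheme (\ref{5.5})--(\ref{5.6}) to an equivalent two-level scheme for the pair of ``velocity'' and ``averaged displacement'' variables $w^n=(y^n-y^{n-1})/\tau$ and $v^n=(y^n+y^{n-1})/2$, exactly as in the parabolic case of Theorem \ref{t-3}, and then run a discrete energy argument. First I would expand the factorized operator: since $A_2=A_1^*$, we have $G=(E+\sigma\tau^2A_1)(E+\sigma\tau^2A_2)=E+\sigma\tau^2 A+\sigma^2\tau^4 A_1A_1^*$, which is self-adjoint and $\ge E$. Inserting the identity $y^n=\frac14(y^{n+1}+2y^n+y^{n-1})-\frac14(y^{n+1}-2y^n+y^{n-1})$ into (\ref{5.5}) transfers a term $-\frac{\tau^2}{4}A$ from the stiffness part into the operator multiplying the second difference, producing exactly the form (\ref{5.7}) with $R=G-\frac{\tau^2}{4}A=E+(\sigma-\frac14)\tau^2 A+\sigma^2\tau^4 A_1A_1^*$. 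The restriction $\sigma\ge 1/4$ then makes every summand of $R$ a nonnegative self-adjoint operator, so $R=R^*\ge E$, which is (\ref{5.8}); together with $A=A^*>0$ from (\ref{2.4}) this is what guarantees that $\mathcal{E}_n=\|v^n\|_A^2+\|w^n\|_R^2$ is the square of a genuine norm of $(v^n,w^n)$, hence a legitimate stability functional.

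Next I would take the scalar product of (\ref{5.7}) with $2(v^{n+1}-v^n)=\tau(w^{n+1}+w^n)$. Because $R$ and $A$ are self-adjoint, the first term telescopes as $(Rw^{n+1},w^{n+1})-(Rw^n,w^n)$ and the stiffness term as $(Av^{n+1},v^{n+1})-(Av^n,v^n)$, yielding the exact energy balance (\ref{5.9}): $\mathcal{E}_{n+1}=\mathcal{E}_n+\tau(\varphi^n,w^{n+1}+w^n)$. The delicate point, and the essential difference from the parabolic Theorems \ref{t-2}--\ref{t-3}, is that here the left-hand side carries no dissipative term into which the forcing could be absorbed, so the forcing can genuinely feed energy into the scheme and one must accept a bound that grows with $n$. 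To control it I would split the right-hand side as $\tau(\varphi^n,w^n)+\tau(\varphi^n,w^{n+1})$ and apply Cauchy--Schwarz with Young's inequality in the $R$-metric: symmetric weighting for the $w^n$ term and asymmetric weighting with parameter $1+\frac{\tau}{2}$ for the $w^{n+1}$ term (the two displayed inequalities preceding (\ref{5.10})). The value $1+\frac{\tau}{2}$ is chosen precisely so that the residual coefficient of $\|w^{n+1}\|_R^2$ left on the left equals $(1+\frac{\tau}{2})^{-1}$, which, after multiplying through, is reconciled with the coefficient $1+\frac{\tau}{2}$ standing in front of $\|w^n\|_R^2$ on the right.

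From that inequality I would pass to the level-wise estimate (\ref{5.10}) by bounding the multiplicative factors by exponentials via $1+\varepsilon\tau<\exp(\varepsilon\tau)$, getting $\mathcal{E}_{n+1}\le\exp(\tau)\mathcal{E}_n+\exp(0.75\tau)\,\tau\|\varphi^n\|_{R^{-1}}^2$. Iterating this recursion over $n$ and using $(n+1)\tau\le T$ gives $\mathcal{E}_{n+1}\le\exp(T)\big(\mathcal{E}_0+\tau\sum_{k=0}^{n}\|\varphi^k\|_{R^{-1}}^2\big)$, a bound uniform in $\tau$ and $h$, hence unconditional stability with respect to both the initial data $y^0,y^1$ (through $\mathcal{E}_0$) and the right-hand side. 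The only genuine calculational obstacle is the bookkeeping in the Young-inequality step that matches the two $\|w\|_R^2$ coefficients and produces the clean exponents $\exp(\tau)$ and $\exp(0.75\tau)$; the rest is the now-standard operator-difference energy machinery applied to the already symmetrized form (\ref{5.7}).
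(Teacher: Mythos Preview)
Your proposal is correct and follows essentially the same route as the paper: rewrite (\ref{5.5})--(\ref{5.6}) in the two-level form (\ref{5.7}) for $(v^n,w^n)$, verify $R=G-\tfrac{\tau^2}{4}A\ge E$ for $\sigma\ge 0.25$, derive the exact energy balance (\ref{5.9}) by testing against $\tau(w^{n+1}+w^n)$, and then use the asymmetric Young-inequality splitting with parameter $1+\tfrac{\tau}{2}$ together with $1+\varepsilon\tau<\exp(\varepsilon\tau)$ to reach (\ref{5.10}). Your bookkeeping of the coefficients --- $(1+\tau/2)^2<\exp(\tau)$ and $(1+\tau/2)(1+\tau/4)<\exp(0.75\tau)$ --- is exactly what the paper (implicitly) uses.
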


\end{document}